\newtheorem{theorem}{Theorem}[section]
\newtheorem{prop}[theorem]{Proposition}
\newenvironment{proof}{\prepf\rm}{\endprepf}
\newcommand{\qed}{\hfill$\Box$}
\newenvironment{example}{\preex\rm}{\endpreex}
\newcommand{\Aut}{\mathop{\mathrm{Aut}}\nolimits}
\newcommand{\Sym}{\mathop{\mathrm{Sym}}\nolimits}
\begin{document}

\title{The diagonal graph}
\author{R. A. Bailey and Peter J. Cameron\\
School of Mathematics and Statistics, University of St Andrews,\\
North Haugh, St Andrews, Fife, KY16 9SS, U.K.}
\date{}
\maketitle

\begin{abstract}
According to the O'Nan--Scott Theorem, a finite primitive permutation group
either preserves a structure of one of three types (affine space, Cartesian
lattice, or diagonal semilattice), or is almost simple. However, diagonal
groups are a much larger class than those occurring in this theorem. For any
positive integer $m$ and group $G$ (finite or infinite), there is a diagonal
semilattice, a sub-semilattice of the lattice of partitions of a set $\Omega$,
whose automorphism group is the corresponding diagonal group. Moreover, there
is a graph (the diagonal graph), bearing much the same relation to the
diagonal semilattice and group as the Hamming graph does to the Cartesian
lattice and the wreath product of symmetric groups.

Our purpose here, after a brief introduction to this semilattice and graph, is
to establish some properties of this graph. The diagonal graph 
$\Gamma_D(G,m)$ is a Cayley graph for the group~$G^m$, and so is
vertex-transitive. We establish its clique number in general and its 
chromatic number in most cases, with a conjecture about the chromatic number
in the remaining cases. We compute the spectrum of the adjacency matrix of
the graph, using a calculation of the M\"obius function of the diagonal
semilattice. We also compute some other graph parameters and symmetry
properties of the graph.

We believe that this family of graphs will play a significant role in
algebraic graph theory.
\end{abstract}

\section{Introduction}

Let $m$~be a positive integer and $G$~a group, finite or infinite. The
\emph{diagonal group} $D(G,m)$ is the group of permutations of~$G^m$ generated
by the following transformations:
\begin{itemize}\itemsep0pt
\item $G^m$, acting by right multiplication;
\item $G$, acting diagonally by left multiplication (that is, the element
$x$~of~$G$ induces the map
$(g_1,\ldots,g_m)\mapsto(x^{-1}g_1,\ldots,x^{-1}g_m)$);
\item automorphisms of~$G$, acting coordinatewise;
\item the symmetric group $\Sym(m)$, acting by permuting the coordinates;
\item the map $(g_1,g_2,\ldots,g_m)\mapsto(g_1^{-1},g_1^{-1}g_2,\ldots,
g_1^{-1}g_m)$.
\end{itemize}
It is a transitive permutation group of order $|G|^m\cdot|\Aut(G)|\cdot(m+1)!$.

Diagonal groups play an important role in the O'Nan--Scott theorem describing
finite primitive permutation groups, given in~\cite{scott}.
One of the types which occur consists of the \emph{simple diagonal groups},
subgroups of $D(G,m)$ where $G$~is a non-abelian finite simple group.

In the case that $m=1$, the diagonal group $D(G,1)$ is the group of permutations
of $G$ generated by left and right translation, automorphisms, and inversion.
In this case there is no geometry associated with the group, although it does
preserve a fusion of the \emph{conjugacy class association scheme} of~$G$;
we do not consider this case further, but usually assume that $m\geqslant2$ in
what follows.

We now outline the contents of the paper. We give a brief introduction to the
lattice of partitions of a set in Section~\ref{s:partitions}; then we define
the semilattice $\mathcal{D}(G,m)$ associated with the diagonal group
$D(G,m)$, and the corresponding \emph{diagonal graph} $\Gamma_D(G,m)$. We
identify this graph with the Latin square graph of the Cayley table of $G$ if
$m=2$, and with the folded cube if $G=C_2$, and give an example satisfying
neither of these conditions.

Sections~\ref{s:mobius} and~\ref{s:spectrum} calculate the M\"obius function
of the diagonal semilattice and use this to find the spectrum of the diagonal
graph. Section~\ref{s:chromatic} states a theorem and conjecture on the
chromatic number of the diagonal graph. The final section gives some further
properties of the diagonal graph: it is a Cayley graph, but not in general
either edge-transitive or distance-regular; we give a formula for its diameter.

\section{The partial order on  partitions}
\label{s:partitions}

In this section, we briefly remind readers about the partial order on
partitions.

Let $P$ and $Q$ be two partitions of a given set~$\Omega$.  Then $P$ is
defined to be \textit{finer} than $Q$, written $P\prec Q$, if every
part of $P$ is contained in a single part of $Q$ but $P\ne Q$.
Write $P\preccurlyeq Q$ to mean that $P\prec Q$ or $P=Q$. Then
$\preccurlyeq$ is a partial order.

If $\left| \Omega \right| \geqslant 2$ then there are two trivial partitions of
$\Omega$.  The universal partition $U$ has a single part; at the other extreme,
the parts of the equality partition $E$ are all singletons.  Thus
$E \preccurlyeq P \preccurlyeq U$ for every partition~$P$ of $\Omega$.

The \textit{join} $P\wedge Q$ of $P$ and $Q$ is the partition whose parts are
all non-empty intersections of a part of $P$ with a part of~$Q$.  Thus
$P \wedge Q \preccurlyeq P$, $P \wedge Q \preccurlyeq Q$, and, if $R$ is any
partition of $\Omega$ satisfying $R\preccurlyeq P$ and $R \preccurlyeq Q$,
then $R \preccurlyeq P \wedge Q$.

The dual notion is the \textit{meet} $P \vee Q$ of $P$ and~$Q$.  Draw a graph
with vertext-set~$\Omega$, joining two distinct vertices if they lie in the
same part of~$P$ or the same part of~$Q$.  Then the parts of $P\vee Q$ are
the connected components of this graph.  It follows that $P \preccurlyeq
P \vee Q$, $Q \preccurlyeq P \vee Q$, and, if $R$ is any partition of $\Omega$
satisfying $P\preccurlyeq R$ and $Q \preccurlyeq R$, then
$P \vee Q \preccurlyeq R$.

A set of partitions which is closed under both join and meet is called a
\textit{lattice}. We have to consider sets which are closed under
join but not under meet; such a set called a \textit{join semi\-lattice}. See
the next section for how our examples arise.

Partial orders are often represented by \textit{Hasse diagrams}.  For a
collection of partitions of a set, there is one dot for each partition.
If $P \prec Q$ then the dot for $P$ is lower in the diagram than the dot
for~$Q$.  If, in addition, there is no partition~$R$ in the collection with
$P\prec R \prec Q$, then there is a line joining the dots for $P$ and $Q$.
We give two examples in the next section.

Given any partially ordered set~$\mathcal{S}$, its \textit{zeta function}
is the $\mathcal{S} \times \mathcal{S}$ matrix defined by $\zeta(P,Q)=1$
if $P \preccurlyeq Q$ and $\zeta(P,Q)=0$ otherwise.  This matrix is
invertible, and it inverse is called the \textit{M\"obius function} $\mu$
for $\mathcal{S}$. See \cite{rota}.

\section{Diagonal semilattice and diagonal graph}
\label{s:diagonal}
We begin by defining some (semi)lattices of partitions of a set $\Omega$.

A \emph{Latin square} is usually defined as a square array of size
$q\times q$ with entries from an alphabet of size $q$. We can interpret this
as a lattice of partitions of the cells of the array containing five
partitions $\{E,R,C,L,U\}$, where $E$ is the partition into singletons,
$U$ is the partition with a single part, and $R$, $C$ and $L$ are the partitions
into rows, columns, and letters. We have $R\wedge C = R\wedge L = C \wedge
L = E$ and $R \vee C = R \vee L = C \vee L = U$. The
%partition lattice
set $\{E,R,C,L,U\}$ of partitions
determines the Latin square up to paratopism. The Hasse diagram is shown in
Figure~\ref{fig:LS}.

\begin{figure}
  \begin{center}
\setlength{\unitlength}{1mm}
\begin{picture}(30,30)
\multiput(15,5)(0,20){2}{\circle*{2}}
\multiput(5,15)(10,0){3}{\circle*{2}}
\multiput(5,15)(10,10){2}{\line(1,-1){10}}
\multiput(5,15)(10,-10){2}{\line(1,1){10}}
\put(15,5){\line(0,1){20}}
\put(10,3){$E$}
\put(10,13){$C$}
\put(10,23){$U$}
\put(0,13){$R$}
\put(27,13){$L$}
\end{picture}
\end{center}
  \caption{The Hasse diagram for a Latin square}
  \label{fig:LS}
  \end{figure}
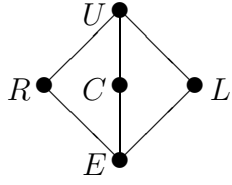

If $\Omega=A^m$ (the set of $m$-tuples over an alphabet $A$),
we define the $m$-dimensional \emph{Cartesian lattice} to be the lattice
consisting of the partitions $Q_I$ for all $I\subseteq\{1,\ldots,m\}$, where
$Q_I$ corresponds to the equivalence relation in which two $m$-tuples are
equivalent if they agree in all coordinates outside the subset $I$. It is
isomorphic to the Boolean lattice of subsets of the set $\{1,\ldots,m\}$ by
the map $I\mapsto Q_I$ for all $I\subseteq\{1,\ldots,m\}$.

Let $G$ be a group and $m$ a positive integer. In the first section we
defined the diagonal group $D(G,m)$ as a permutation group on the set
$\Omega=G^m$. Now we construct on $\Omega$ a semilattice of partitions, and a
graph, which admit the diagonal group as a group of automorphisms.

For $1\leqslant i\leqslant m$, let $Q_i$ be the partition of $\Omega$ defined
by the equivalence relation $\equiv_i$, where
\[(g_1,\ldots,g_m)\equiv_i(h_1,\ldots,h_m)\Leftrightarrow(\forall j\ne i)
(g_j=h_j);\]
this is the orbit partition of the $j$th coordinate subgroup of $G^m$. Also,
let $Q_0$ be the orbit partition of the group $G$ acting on $\Omega$ by
left multiplication, that is, corresponding to the equivalence relation
$\equiv_0$ defined by
\[(g_1,\ldots,g_m)\equiv_0(h_1,\ldots,h_m)\Leftrightarrow(\exists x\in G)
(\forall i)(h_i=xg_i).\]
The \emph{diagonal semilattice} $\mathcal{D}(G,m)$ is the join-semilattice
generated by the partitions $Q_0,Q_1,\ldots,Q_m$. Figure~\ref{fig:dH}
shows the Hasse diagram of $\mathcal{D}(G,3)$.

\begin{figure}
  \newcommand{\blobb}{\circle*{1}}
    \begin{center}
    \setlength{\unitlength}{2mm}
    \begin{picture}(60,40)
      \put(55,15){\line(0,1){10}}
      \put(55,15){\line(-1,1){10}}
      \put(55,15){\line(-3,1){30}}
      \put(15,15){\line(-1,1){10}}
      \put(15,15){\line(1,1){10}}
      \put(15,15){\line(0,1){10}}
      \put(35,15){\line(1,1){10}}
      \put(35,15){\line(0,1){10}}
      \put(35,15){\line(-3,1){30}}
      \put(45,15){\line(-1,1){10}}
      \put(45,15){\line(-3,1){30}}
      \put(45,15){\line(1,1){10}}
      \put(30,5){\line(1,2){5}}
      \put(30,5){\line(-3,2){15}}
      \put(30,5){\line(3,2){15}}
      \curve(30,5,55,15)
      \put(30,35){\line(-1,-2){5}}
      \put(30,35){\line(1,-2){5}}
      \put(30,35){\line(-3,-2){15}}
      \put(30,35){\line(3,-2){15}}
      \curve(30,35,5,25)
      \curve(30,35,55,25)
      \put(35,15){\blobb}
      \put(36,15){\makebox(0,0)[l]{$Q_1$}}
      \put(15,15){\blobb}
      \put(14,15){\makebox(0,0)[r]{$Q_0$}}
      \put(45,15){\blobb}
      \put(56,15){\makebox(0,0)[l]{$Q_3$}}
      \put(55,15){\blobb}
      \put(46,15){\makebox(0,0)[l]{$Q_2$}}
      \put(30,5){\blobb}
      \put(30,3){\makebox(0,0){$E$}}
\put(5,25){\blobb}
\put(2,25){\makebox(0,0){$Q_{\{0,1\}}$}}
\put(15,25){\blobb}
\put(12,25){\makebox(0,0){$Q_{\{0,2\}}$}}
\put(25,25){\blobb}
\put(22,25){\makebox(0,0){$Q_{\{0,3\}}$}}
\put(35,25){\blobb}
\put(36,25){\makebox(0,0)[l]{$Q_{\{1,2\}}$}}
\put(45,25){\blobb}
\put(46,25){\makebox(0,0)[l]{$Q_{\{1,3\}}$}}
\put(55,25){\blobb}
\put(56,25){\makebox(0,0)[l]{$Q_{\{2,3\}}$}}
\put(30,35){\blobb}
\put(30,37){\makebox(0,0){$U$}}
    \end{picture}
    \end{center}
    \caption{The Hasse diagram for a diagonal semi-lattice with dimension~$3$}
    \label{fig:dH}
\end{figure}
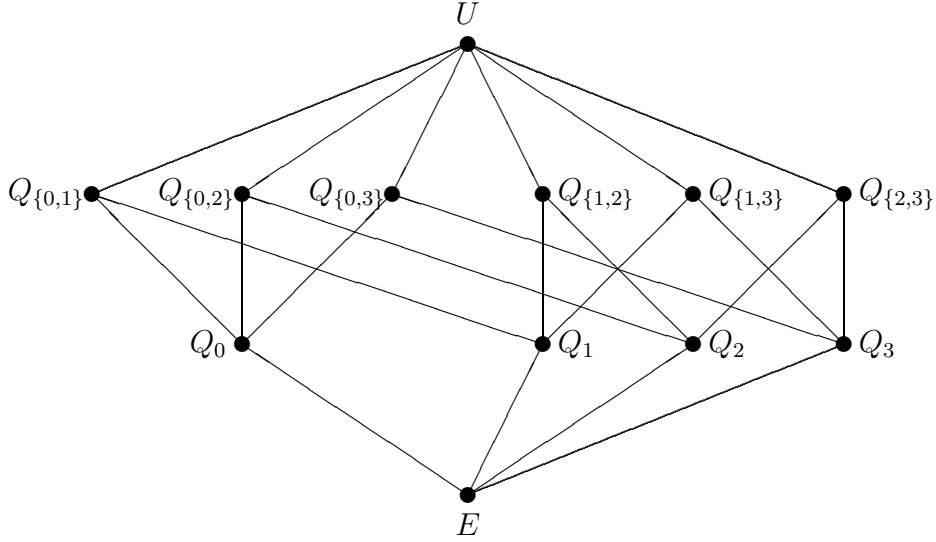

Note that, in a Latin square (regarded as a partition lattice) or a diagonal
semilattice, if there are $m+1$ minimal non-trivial partitions, then any $m$
of them generate a Cartesian lattice (in the Latin square case, a square grid).
Also, the diagonal semilattice $\mathcal{D}(G,2)$ is isomorphic to the lattice
defined by a Latin square which is the Cayley table of $G$.

Now we can state the main theorems of~\cite{diag}.

\begin{theorem}\label{t:main}
Let $m\ge2$, and let $\Omega$ be a set and $Q_0,Q_1,\ldots,Q_m$ a set of
partitions of $\Omega$. Suppose that any $m$ of these partitions generate a
lattice that is isomorphic to a Cartesian lattice.
\begin{enumerate}\itemsep0pt
\item If $m=2$, then the three partitions together with the trivial partitions
$E$ and $U$ form a Latin square, unique up to paratopism.
\item If $m\ge3$, then there is a group $G$, unique up to isomorphism, such
that the $m+1$ partitions generate the diagonal semilattice $\mathcal{D}(G,m)$.
\end{enumerate}
\end{theorem}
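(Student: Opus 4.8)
The plan is to prove both parts by the same two-stage strategy: first use $m$ of the partitions to impose coordinates on $\Omega$, and then analyse how the remaining partition sits inside this coordinate frame. For part~(a), since $Q_1$ and $Q_2$ generate a Cartesian lattice I would identify $\Omega$ with a grid $A\times B$ so that $Q_1$ and $Q_2$ are the two axis-parallel partitions. The hypothesis that $Q_0$ generates a Cartesian lattice with each of $Q_1$ and $Q_2$ forces every part of $Q_0$ to be a common transversal of the rows and the columns; counting points then gives $|A|=|B|=q$, so $Q_0$ consists of $q$ disjoint transversals. Labelling the parts of $Q_1$, $Q_2$, $Q_0$ as rows, columns and symbols turns this into a $q\times q$ array in which each symbol occurs once per row and once per column, and the conditions $Q_0\wedge Q_1=Q_0\wedge Q_2=E$ are exactly the Latin square axioms. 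The only freedom is the labelling of the three sets of parts together with the choice of which partition plays which role, and this is precisely the paratopism group, giving uniqueness up to paratopism.

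For part~(b) I would again coordinatise, using $Q_1,\ldots,Q_m$ to identify $\Omega$ with $A^m$, with each $Q_i$ the partition into lines parallel to the $i$th axis and $|A|=q$. After fixing a base point to serve as the identity, the idea is to read a binary operation off $Q_0$. Applying the hypothesis to the $m$-subset $\{Q_0\}\cup\{Q_k:k\ne i\}$ shows that, relative to any pair of coordinate directions, $Q_0$ behaves like the symbol partition of a Latin square; from a pair $(1,i)$ one extracts a quasigroup operation, and transporting everything to the first coordinate yields a single binary operation $*$ on $A$. That $*$ has an identity and inverses is routine from the transversal property $Q_0\wedge Q_i=E$.

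The main obstacle is associativity: at this stage $(A,*)$ is merely a loop, and nothing established so far excludes non-associative examples. This is exactly where $m\ge3$ must enter. I would isolate three coordinate directions, say $1,2,3$, by passing to the quotient of $\Omega$ by $\bigvee_{k\notin\{1,2,3\}}Q_k$; applying the Cartesian-lattice hypothesis to the various $m$-subsets that drop a single one of $Q_0,Q_1,Q_2,Q_3$ shows that in the quotient $\cong A^3$ every $3$-subset of the images of $Q_0,Q_1,Q_2,Q_3$ again generates a Cartesian lattice, reducing the problem to $m=3$. There the mutual compatibility forced among the three Latin squares attached to the pairs $(1,2)$, $(1,3)$, $(2,3)$ is a closure condition of quadrangle (Reidemeister) type, which is precisely the criterion for a Latin square to be isotopic to a group; together with the identity this makes $*$ associative, and checking that the operation obtained is independent of the chosen triple makes it well defined globally. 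When $m=2$ no such third direction exists, which is why part~(a) yields only a Latin square. Extracting this closure condition cleanly from the relations among the $Q_i$ under $\wedge$ and $\vee$ is the technical heart of the argument.

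Finally I would close the loop. Writing $G$ for the group $(A,*)$, I would verify that under the identification $\Omega\cong G^m$ the partition $Q_0$ is the orbit partition of left multiplication and each $Q_i$ the $i$th coordinate partition, so that $Q_0,\ldots,Q_m$ generate the diagonal semilattice $\mathcal{D}(G,m)$. For uniqueness up to isomorphism I would observe that the Cayley table of $G$ is recovered, up to isotopy, as one of the Latin squares above; for $m\ge3$ the distinguished role of $Q_0$ among the directions removes the conjugacy freedom present in part~(a), so the square is determined up to isotopy. Since isotopic groups are isomorphic, any two groups satisfying the conclusion must be isomorphic, which completes the argument.
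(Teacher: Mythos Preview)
The paper does not prove this theorem at all: it is stated explicitly as one of ``the main theorems of~\cite{diag}'' and is simply quoted from that reference without proof. So there is no proof in the present paper to compare your proposal against.

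That said, your sketch is a plausible outline of the argument actually carried out in~\cite{diag}: coordinatise using $m$ of the partitions, read off a quasigroup from the remaining one, and use the extra direction available when $m\ge3$ to upgrade the quasigroup to a group via a Reidemeister/quadrangle closure condition. Two points deserve care. First, your reduction to the case $m=3$ by passing to a quotient by $\bigvee_{k\notin\{1,2,3\}}Q_k$ requires checking that the images of the four partitions in the quotient again satisfy the Cartesian-lattice hypothesis; this is true but not automatic, and in~\cite{diag} the corresponding step is done with some care. Second, your uniqueness argument speaks of ``the distinguished role of $Q_0$'', but the hypothesis is completely symmetric in $Q_0,\ldots,Q_m$; the correct statement is that any choice of which partition to leave out of the coordinatisation yields an isotopic Cayley table, and hence (by Albert's theorem that isotopic groups are isomorphic) an isomorphic group. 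With these caveats your strategy matches the published one, though the actual execution of the associativity step --- which you rightly identify as the technical heart --- is substantially longer than your sketch suggests.
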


\begin{theorem}
Let $G$ be a group and $m\ge2$. Then the automorphism group of the diagonal
semilattice $\mathcal{D}(G,m)$ is the diagonal group $D(G,m)$ defined earlier.
\end{theorem}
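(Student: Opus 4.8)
The plan is to prove the two inclusions $D(G,m)\le\Aut(\mathcal{D}(G,m))$ and $\Aut(\mathcal{D}(G,m))\le D(G,m)$ separately. For the first, since $\Aut(\mathcal{D}(G,m))$ is a group and the listed transformations generate $D(G,m)$, it suffices to check that each generator maps the generating set $\{Q_0,Q_1,\ldots,Q_m\}$ of the semilattice to itself; any permutation of $\Omega$ that does so automatically preserves all joins, hence the whole poset. Right multiplication, diagonal left multiplication, and coordinatewise automorphisms each fix every $Q_i$ (a short check from the definitions of $\equiv_i$ and $\equiv_0$), while $\Sym(m)$ fixes $Q_0$ and permutes $Q_1,\ldots,Q_m$ among themselves.

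The one generator requiring real computation is the map $\tau\colon(g_1,\ldots,g_m)\mapsto(g_1^{-1},g_1^{-1}g_2,\ldots,g_1^{-1}g_m)$. I would verify directly that $\tau$ carries a $Q_1$-class $\{(a,g_2,\ldots,g_m):a\in G\}$ to the $Q_0$-class $\{(y,yg_2,\ldots,yg_m):y\in G\}$ and, conversely, a $Q_0$-class to a $Q_1$-class, while fixing each $Q_i$ with $i\ge2$; thus $\tau$ interchanges $Q_0$ and $Q_1$. Together with $\Sym(m)$ this shows that $D(G,m)$ induces the full symmetric group $\Sym(m+1)$ on the set of atoms $\{Q_0,\ldots,Q_m\}$, the structural fact driving the rest of the argument.

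For the reverse inclusion, let $\phi\in\Aut(\mathcal{D}(G,m))$. The first step is to identify the atoms: in the poset, the partitions covering the bottom element $E$ are exactly $Q_0,\ldots,Q_m$, so $\phi$ permutes this set and induces an element of $\Sym(m+1)$. Using the previous paragraph I may compose $\phi$ with a suitable element of $D(G,m)$ so as to assume that $\phi$ fixes every atom $Q_i$; and since the regular subgroup $G^m$ (right multiplication) also fixes every atom and acts transitively on $\Omega$, I may further compose with a right multiplication to assume that $\phi$ fixes the base point $e=(1,\ldots,1)$. It then remains to prove that an automorphism fixing $e$ and every $Q_i$ lies in $\Aut(G)\le D(G,m)$, since undoing these compositions will place the original $\phi$ in $D(G,m)$.

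The heart of the proof, and the step I expect to be the main obstacle, is this last reduction. Because $\phi$ preserves each coordinate partition $Q_1,\ldots,Q_m$ and $m\ge2$, a standard grid argument forces $\phi$ to act coordinatewise, $\phi(a_1,\ldots,a_m)=(\sigma_1(a_1),\ldots,\sigma_m(a_m))$ with each $\sigma_i$ a bijection of $G$ fixing $1$. Applying $\phi$ to the $Q_0$-class through $e$, namely the diagonal $\{(x,\ldots,x):x\in G\}$, and using that its image must again be the $Q_0$-class through $e$, collapses the $\sigma_i$ to a single bijection $\sigma=\sigma_1=\cdots=\sigma_m$. Finally I would impose that $\phi$ preserves every $Q_0$-class: writing out that the image of $\{(xg_1,\ldots,xg_m):x\in G\}$ is again a left-diagonal coset yields, after the substitution $s=xg_i$, the functional equation $\sigma(sw)=\sigma(s)\sigma(w)$ for all $s,w\in G$ (with $w=g_i^{-1}g_j$ ranging over all of $G$ as the class and the indices vary). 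Hence $\sigma$ is an automorphism of $G$, completing the argument. The delicate points to get right are the bookkeeping in this functional equation and the verification that $w$ genuinely ranges over all of $G$, together with the clean use of transitivity and the $\Sym(m+1)$-action to arrange the two normalisations.
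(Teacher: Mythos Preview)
The paper does not actually prove this theorem: it is quoted, together with Theorem~\ref{t:main}, as one of the main results of~\cite{diag}, and no argument is given here. So there is no proof in this paper to compare your proposal against.

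That said, your outline is essentially correct and is the natural way to prove the result. The inclusion $D(G,m)\le\Aut(\mathcal{D}(G,m))$ is exactly as you say, and the verification that $\tau$ swaps $Q_0$ and $Q_1$ while fixing $Q_2,\ldots,Q_m$ is routine. For the reverse inclusion your reductions are sound: the atoms of the semilattice are precisely $Q_0,\ldots,Q_m$, the subgroup generated by $\Sym(m)$ and $\tau$ acts as $\Sym(m+1)$ on them, and right multiplication is transitive while fixing every $Q_i$; so one may assume $\phi$ fixes $e$ and every atom. The ``grid argument'' is also fine once you note that $\phi$ then fixes every join, in particular the rank-$(m-1)$ partitions $\bigvee_{j\ne i,\,1\le j\le m}Q_j$, which are exactly the partitions of $G^m$ by the $i$th coordinate; this forces $\phi$ to act coordinatewise by bijections $\sigma_i$ fixing~$1$, and preservation of the $Q_0$-class through $e$ collapses them to a single $\sigma$.

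The only place your sketch is a little loose is the derivation of the homomorphism identity. As written, comparing coordinates $i$ and $j$ in a general $Q_0$-class gives $\sigma(sw)=\sigma(s)\,\sigma(g_i)^{-1}\sigma(g_j)$ with $s=xg_i$ and $w=g_i^{-1}g_j$, and turning this into $\sigma(sw)=\sigma(s)\sigma(w)$ threatens to be circular. The clean fix is to choose the representative of the $Q_0$-class with $g_1=1$: then the first coordinate reads off $y=\sigma(x)$ directly, and the remaining coordinates give $\sigma(xg_i)=\sigma(x)\sigma(g_i)$ for arbitrary $x,g_i\in G$, so $\sigma\in\Aut(G)$ as required.
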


Associated with a semi\-lattice satisfying the hypotheses of
%this theorem
Theorem~\ref{t:main}
is a
graph, defined as follows: the vertex set is $\Omega=G^m$; two vertices are
adjacent if they are contained in a part of one of the partitions 
$Q_0,\ldots,Q_m$. If $m=2$ and $Q_0,Q_1,Q_2$ are the rows, columns and letters
of a Latin square $\Lambda$, then this graph is precisely the \emph{Latin
  square graph} associated with $\Lambda$.
%Latin square graphs form a prolific family of strongly regular graphs.
If $Q_0,\ldots,Q_m$ are the minimal
non-trivial partitions in a diagonal semilattice $\mathcal{D}(G,m)$, then
the graph is the \emph{diagonal graph}, which we denote by $\Gamma_D(G,m)$.

For later use, we note that, if $m=1$, the diagonal semilattice has just two
partitions $E$ and $U$ (indeed $Q_0=Q_1=U$), and the diagonal graph is a
complete graph of order~$|G|$.

\begin{theorem}
If $m>2$, or if $m=2$ and $|G|>4$, then the automorphism group of the diagonal
graph $\Gamma_D(G,m)$ is the diagonal group $D(G,m)$.
\end{theorem}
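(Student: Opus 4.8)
The plan is to prove the nontrivial inclusion $\Aut(\Gamma_D(G,m))\le D(G,m)$; the reverse inclusion is immediate, since every element of $D(G,m)$ preserves each of the partitions $Q_0,\ldots,Q_m$, and adjacency in $\Gamma_D(G,m)$ is defined entirely in terms of these partitions. The strategy for the hard inclusion is to show that an arbitrary graph automorphism $g$ permutes the set $\{Q_0,\ldots,Q_m\}$ of minimal partitions. Once this is known, $g$ is an automorphism of the join-semilattice they generate, namely $\mathcal{D}(G,m)$, and the preceding theorem identifies $\Aut(\mathcal{D}(G,m))$ with $D(G,m)$, finishing the proof. So the whole task is to reconstruct the set of partitions $Q_i$ from the graph alone.

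First I would pin down the cliques. Each part of each $Q_i$ is a clique of size $|G|$; any two parts (from the same or different partitions) meet in at most one vertex, and each vertex lies in exactly $m+1$ of them, so $\Gamma_D(G,m)$ is regular of valency $(m+1)(|G|-1)$. For $m\ge3$ I would compute the local graph: the subgraph induced on the neighbours of a vertex $v$ is the disjoint union of the $m+1$ punctured parts through $v$, that is, a disjoint union of $m+1$ cliques of size $|G|-1$, with no edges between distinct parts. Consequently the maximal cliques of $\Gamma_D(G,m)$ are exactly the parts of the $Q_i$, and the $m+1$ maximal cliques through each vertex are recovered canonically as $v$ together with the connected components of its local graph. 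For $m=2$ the graph is the Latin-square graph of the Cayley table of $G$, the local graph is not a disjoint union of cliques, and one must instead invoke the rigidity of Latin-square graphs: when $|G|>4$ the lines (rows, columns and letters) are the only maximum cliques, which is exactly the hypothesis excluding the small exceptional squares.

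Next I would recover the parallelism, i.e.\ decide which maximal cliques belong to a common $Q_i$, by a relation that any automorphism must respect. For adjacent vertices $v,w$ lying in a common part $C$ (of some $Q_i$), I would compare the remaining $m$ maximal cliques through $v$ with those through $w$: two such cliques $C'\ni v$ and $C''\ni w$ carry a perfect matching of $|G|$ graph-edges between them precisely when they are the two parts of a single $Q_j$ with $j\ne i$ differing only in the $C$-direction, whereas cliques in different directions span only the single edge $vw$. This prism/matching relation is graph-theoretic, so $g$ preserves it; it sets up, for each edge, a bijection between the directions at its two endpoints, and the matching condition forces this bijection to identify like directions. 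Since $\Gamma_D(G,m)$ is connected, these local bijections paste together into a single permutation $\pi$ of $\{0,1,\ldots,m\}$ with $g(Q_j)=Q_{\pi(j)}$ for all $j$. Hence $g$ permutes the minimal partitions, and we conclude as above.

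The main obstacle is the rigidity underlying the two reconstruction steps: ruling out extra maximal cliques, and in the matching step extra perfect matchings, so that the clique geometry, and hence parallelism, is forced. This is exactly where small cases genuinely misbehave. For $m=2$ the Latin-square-graph rigidity fails when $|G|\le4$ (the graphs acquire further maximum cliques and larger automorphism groups), which is why that bound appears; and for very small $|G|$ the local cliques degenerate (when $|G|=2$ each part is a single edge, the matching invariant collapses, and the propagation argument breaks down), so the smallest groups must be checked separately. Verifying that no spurious cliques or matchings arise for the remaining $G$ and $m$, uniformly in the order of $G$, is the technical heart of the argument.
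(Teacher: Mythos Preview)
The paper does not supply a proof of this theorem: together with the two theorems immediately preceding it, it is quoted in Section~\ref{s:diagonal} as one of the main results of the companion paper~\cite{diag}, and no argument is given here. So there is nothing in the present paper against which to compare your attempt.

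For what it is worth, your strategy is the natural one and is, in outline, how the result is obtained in~\cite{diag}: show that every graph automorphism permutes the maximal cliques, then that it permutes the partitions $Q_0,\ldots,Q_m$ themselves (your ``parallelism'' step), and conclude via $\Aut(\mathcal{D}(G,m))=D(G,m)$. Your local-graph computation for $m\ge3$ is exactly the observation recorded just before the theorem, that every edge lies in a unique maximal clique; and for $m=2$ the appeal to Latin-square-graph rigidity when $|G|>4$ is the standard route. The hazard you flag at $|G|=2$ is genuine: the parts are then single edges, the prism/matching invariant degenerates, and indeed $\Gamma_D(C_2,3)$ is the folded $4$-cube $K_{4,4}$, whose automorphism group has order $1152$ while $|D(C_2,3)|=2^3\cdot 4!=192$. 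So not only does your argument require separate handling there, the statement itself needs a small-case caveat that the present paper does not spell out.
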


We conclude this section with further examples.

\begin{example}
Take $G=C_2$, the cyclic
group of order $2$. Then the vertex set of $\Gamma_D(C_2,m)$ is the set
of all binary $m$-tuples, and the partitions $Q_1,\ldots,Q_m$ correspond to
the usual parallel classes of edges of the $m$-dimensional cube, while $Q_0$
consists of the antipodal pairs of vertices. So $\Gamma_D(m,C_2)$ is the
$m$-cube with antipodal vertices joined. This is the \emph{folded cube}
(see~\cite[p.~264]{bcn}).
\end{example}

\begin{example}
Let $m=3$ and $G=C_3$.  Let $x$ be a generator of $G$, and put $a = (x,1,1)$,
$b= (1,x,1)$ and $c=(1,1,x)$.  Then $\Omega = G^3$ and the parts of
$Q_0$, $Q_1$, $Q_2$ and $Q_3$ are the (right) cosets of the subgroups
$\langle abc \rangle$, $\langle a \rangle$, $\langle b \rangle$ and
$\langle c \rangle$ respectively.
The vertices of the diagonal graph $\Gamma_D(G,3)$ are the elements of $G^3$.
Let $g$ and $h$ be two such vertices.  These are joined by an edge if and
only if $gh^{-1}$ is one of $a$, $a^2$, $b$, $b^2$, $c$, $c^2$, $abc$ and
$a^2b^2c^2$.  Thus $\Gamma_D(G,3)$ is regular with valency~$8$.

Vertices $1$ and $ab$ are at distance two.  Their common neighbours are
$a$, $b$, $c$ and $abc$.  Vertices $1$ and $a^2b$ are also at distance two,
but their common neighbours are only $a^2$ and $b$.  Therefore this
diagonal graph is not distance-regular in the sense of \cite{bcn}.
\end{example}

\section{The M\"obius function of the diagonal semilattice}
\label{s:mobius}

Let $m$ be a natural number greater than $1$.
Let $Q_0,\ldots,Q_m$ be $m+1$ partitions of a set $\Omega$ with the property
that any $m$ of them are the minimal elements of a Cartesian lattice of
partitions of $\Omega$. Let $q$ be the (constant) size of a part of one of
the~$Q_i$. (The constancy of $q$ is shown in~\cite{diag}.)

By Theorem~\ref{t:main}, either $m=2$ and
$Q_0,Q_1,Q_2$ are the row, column and letter partitions of the set of cells
of a Latin square (unique up to paratopism), or $m\geqslant3$ and
$Q_0,\ldots,Q_m$ are the minimal partitions of the diagonal semilattice
$\mathcal{D}(G,m)$ of dimension $m$ over a group $G$ (unique up to isomorphism),
with automorphism group the diagonal group $D(G,m)$.

In the case $m=2$, the graph with vertex set $\Omega$, with edges
joining two elements of $\Omega$ if and only if they are contained
in a part of one of the three partitions, is the
Latin square graph associated with the Latin square.
 Latin square graphs form a prolific family
of \emph{strongly regular graphs}: they give rise to more than exponentially
many such graphs with the same parameters on a given (square) number of
vertices. The spectrum of a Latin square graph is well known. We
extend this to higher-dimensional diagonal graphs. First, we compute
the \emph{M\"obius function}~\cite{rota} of the diagonal semilattice.
For each partition~$S$ in the semi\-lattice,
let $\rho(S)$ be its \emph{rank}, which is the
number of steps in a maximal chain from $E$ to~$S$, so that $\rho(U)=m$.

\begin{theorem}
\label{th:Mob}
  The M\"obius function of $\mathcal{D}(G,m)$ is given by
\[\mu(S,T)=\cases{(-1)^{\rho(T)-\rho(S)} & if $T\ne U$,\cr
  (-1)^{\rho(U)-\rho(S)}(\rho(U)-\rho(S)) = (-1)^{m-\rho(S)}(m-\rho(S)) &
  if $T=U$,\cr}\]
for $S\preccurlyeq T$.
%, where $\rho(S)$ is the length of the longest chain from $E$ to $S$.
\end{theorem}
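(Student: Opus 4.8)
The plan is to work throughout with the poset $(\mathcal{D}(G,m),\preccurlyeq)$ together with its rank function $\rho$, after first pinning down its shape exactly. By hypothesis any $m$ of $Q_0,\ldots,Q_m$ generate a Cartesian lattice, and a Cartesian lattice on $m$ generators is isomorphic to the Boolean lattice of subsets of an $m$-set. I would use this to establish that $\mathcal{D}(G,m)$ is a \emph{truncated Boolean lattice}: each element $S\ne U$ can be identified with a subset $I(S)\subseteq\{0,1,\ldots,m\}$ of size at most $m-1$, in such a way that $\rho(S)=|I(S)|$ and $S\preccurlyeq T\iff I(S)\subseteq I(T)$, while every join of $m$ or more of the generators equals the top element $U$, with $\rho(U)=m$. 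In particular $U$ covers precisely the elements of rank $m-1$.

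Granting this description, the case $T\ne U$ is immediate. If $S\preccurlyeq T$ and $T\ne U$, then every $R$ with $S\preccurlyeq R\preccurlyeq T$ also satisfies $|I(R)|\leqslant m-1$, so the interval $[S,T]$ is literally an interval of the Boolean lattice, isomorphic to the lattice of subsets of a $(\rho(T)-\rho(S))$-set. The Möbius function of a Boolean lattice is the familiar alternating sign $\mu_B(X,Y)=(-1)^{|Y|-|X|}$, and this gives $\mu(S,T)=(-1)^{\rho(T)-\rho(S)}$ directly.

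The substantive case is $T=U$, which I would handle through the defining recurrence $\mu(S,U)=-\sum_{S\preccurlyeq R\prec U}\mu(S,R)$. The elements $R$ with $S\preccurlyeq R\prec U$ are exactly those with $I(S)\subseteq I(R)$ and $|I(R)|\leqslant m-1$, and for each such $R$ (which has $R\ne U$) the previous case supplies $\mu(S,R)=(-1)^{|I(R)|-|I(S)|}$. Writing $s=\rho(S)$ and reindexing the sum by $R'=I(R)\setminus I(S)$, a subset of the $(m+1-s)$-element complement of $I(S)$ subject only to $|R'|\leqslant m-1-s$, the recurrence becomes $\mu(S,U)=-\sum_{k=0}^{m-1-s}(-1)^k{m+1-s \choose k}$. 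I would then finish with the standard partial-sum identity $\sum_{k=0}^{j}(-1)^k{n \choose k}=(-1)^j{n-1 \choose j}$, taken at $n=m+1-s$ and $j=m-1-s$; since ${m-s \choose m-1-s}=m-s$, this yields $\mu(S,U)=(-1)^{m-s}(m-s)=(-1)^{m-\rho(S)}(m-\rho(S))$, as required.

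I expect the genuine work to lie in the structural first step rather than in the arithmetic. The point needing care is that \emph{every} join of $m$ or more generators collapses to the single top $U$, with all lower joins remaining distinct of rank equal to their cardinality; it is exactly this collapse that breaks the pure Boolean pattern and is the source of the extra factor $m-\rho(S)$ in the $T=U$ case. Establishing it rests on the structural results behind Theorem~\ref{t:main} (cf.\ \cite{diag}) and on the generators being the \emph{minimal} non-trivial partitions, so that no proper partition of the semilattice lies above $m$ of them. Once the truncated-Boolean description is secured, both the reduction of the $T=U$ case to a truncated alternating binomial sum and the evaluation of that sum are routine.
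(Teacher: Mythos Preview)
Your proposal is correct and follows essentially the same route as the paper: both arguments use that intervals below $U$ are Boolean (giving the $T\ne U$ case immediately), and both handle $T=U$ via the defining recurrence together with the count $\binom{m+1}{i}$ of rank-$i$ elements. The only cosmetic differences are that the paper invokes the self-similarity $[S,U]\cong\mathcal{D}(G,m-\rho(S))$ from~\cite{diag} to reduce to $\mu(E,U)$ before summing (whereas you compute $\mu(S,U)$ directly, arriving at the same sum with $m$ replaced by $m-s$), and that the paper evaluates the truncated alternating sum via $(1-1)^{m+1}=0$ rather than your partial-sum identity.
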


\begin{proof} Every interval $[E,T]$ with $T\ne U$ is a isomorphic to a
  Boolean lattice of rank equal to $\rho(T)$.
%  Moreover, as shown in \cite{diag},
%each of the remaining intervals $[S,U]$ is a
%diagonal semilattice $\mathcal{D}(G,m-\rho(S))$.
So the values of $\mu(S,T)$ for
$T\ne U$ are equal to those in the Boolean lattice, and are as claimed.

As shown in \cite{diag}, each of the remaining intervals $[S,U]$ is a
diagonal semilattice $\mathcal{D}(G,m-\rho(S))$.
Therefore we can obtain $\mu(S,U)$ by replacing $m$ by $m-\rho(S)$ in
the formula for $\mu(E,U)$.

%For the case $T=U$,
Thus it suffices to calculate $\mu(E,U)$. This we can now do as
follows:
\begin{eqnarray*}
\mu(E,U) = -\sum_{E\preccurlyeq S\prec U}\mu(E,S) \\
= -\sum_{i=0}^{m-1}(-1)^i{m+1\choose i},
\end{eqnarray*}
since there are $m+1\choose i$ elements of rank $i$ in the semilattice
when $0\leq i \leq m-1$
(the joins of all $i$-subsets of $\{Q_0,\ldots,Q_m\}$).

Now the last sum is the negative of all terms of the binomial expansion of
$(1-1)^{m+1}$ (which is~$0$) except for the terms with $i=m$ and $i=m+1$. So
\[\mu(E,U)=(-1)^m{m+1\choose m}+(-1)^{m+1} = (-1)^mm,\]
as required. \qed
\end{proof}

\section{The spectrum of the diagonal graph}
\label{s:spectrum}

Put $\Omega=G^m$. For $0\leqslant i\leqslant m$, 
let $A_i$ be the adjacency matrix of the graph~$\Gamma_i$ on the
vertex set $\Omega$
in which two vertices are joined if they lie in the same part of $Q_i$.
This graph is the disjoint union of $q^{m-1}$ copies of the complete graph
of order~$q$; so its spectrum is $(q-1)$ with multiplicity $q^{m-1}$ and
$-1$ with multiplicity $q^{m-1}(q-1)$. Since the matrices $A_i$ commute
pairwise and their sum is the adjacency matrix $A$ of the diagonal graph
$\Gamma_D(G,m)$, we see that the set of eigenvalues of $A$ is contained in
\[\{-(m+1)+kq:0\leqslant k\leqslant m+1\}.\]

Computing the multiplicities is a little more difficult. We use the technique
from experimental design for finding stratum dimensions in Tjur block
structures: see~\cite{goodbook1,goodbook2,goodbook}.

Let $V=\mathbb{R}^\Omega$ be the vector space of real functions on $\Omega$.
For each element $S$ in the diagonal semilattice, let $V_S$ be the subspace
of functions which are constant on the parts of $S$. If $\rho(S)=k$, then
$\dim(V_S)=q^{m-k}$. For convenience we will denote $V_{Q_i}$ by $V_i$.

Any vector in $V_i$ is an eigenvector for $A_i$ with eigenvalue $q-1$, while
any vector orthogonal to $V_i$ is an eigenvector for $A_i$ with
eigenvalue~$-1$.
Hence, if $v\in V_S$ but $v$ is orthogonal to $V_i$ whenever $Q_i$ is not
below $S$, then $v$ is an eigenvector of $A$ with eigenvalue $-(m+1)+\rho(S)q$.

Clearly $V_U$ consists of constant functions and has dimension $1$, associated
with the eigenvalue $(m+1)(q-1)$.

For $S\ne U$, let
\[W_S=V_S\cap\left(\sum_{S\prec T}V_T\right)^\perp.\]
We calculate $n_S=\dim(W_S)$ by M\"obius inversion. We have
\[V_S=\bigoplus_{S\preccurlyeq T}W_T,\]
so
\[q^{m-\rho(S)}=\sum_{S\preccurlyeq T}n_T.\]
M\"obius inversion gives
\[n_S = \sum_{S\preccurlyeq T}\mu(S,T)q^{m-\rho(T)}.\]
Suppose that $m-\rho(S)=s$.
%Hence, if $m-\rho(S)=s$,
Using the fact that the interval $[S,U]$ is isomorphic
to the diagonal semilattice of dimension $m-\rho(S)=s$,
as well as the results of Theorem~\ref{th:Mob}, we have
\begin{eqnarray*}
%n_S &=& \sum_{S\preceq T}\mu(S,T)q^{m-r(T)} \\
 n_S   &=& \sum_{i=0}^{s-1}{s+1\choose i}(-1)^iq^{s-i}+(-1)^ss\\
    &=& q^{-1}((q-1)^{s+1}-(s+1)(-1)^sq-(-1)^{s+1}+(-1)^ssq)\\
    &=& q^{-1}((q-1)^{s+1}+(-1)^{s+1}(q-1))\\
    &=& q^{-1}(q-1)((q-1)^s-(-1)^s).
\end{eqnarray*}

Since there are $m+1\choose k$ elements of the diagonal semilattice with
rank $k$, for $0\leqslant k\leqslant m-1$, we conclude:

\begin{theorem}
If $G$ is a group of order~$q$, then the adjacency matrix of the diagonal graph
$\Gamma_D(G,m)$ has eigenvalue $-(m+1)+kq$ with multiplicity
\[{m+1\choose k}q^{-1}(q-1)((q-1)^{m-k}-(-1)^{m-k}),\]
for $k\in\{0,\ldots,m-1\}$, and eigenvalue $(m+1)(q-1)$ with multiplicity $1$.
\qed
\end{theorem}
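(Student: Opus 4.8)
The plan is to diagonalise the adjacency matrix $A=\sum_{i=0}^m A_i$ by exploiting the fact, already noted above, that the $A_i$ commute, so that $V=\mathbb{R}^\Omega$ decomposes into common eigenspaces. Since $A_i$ acts as $q-1$ on $V_i$ and as $-1$ on $V_i^\perp$, any common eigenvector $v$ that lies in $V_i$ for $i$ in some set $J$ and is orthogonal to $V_i$ for $i\notin J$ is an eigenvector of $A$ with eigenvalue $|J|(q-1)-(m+1-|J|)=|J|q-(m+1)$. So the problem reduces to producing an orthogonal basis of such vectors and, for each, identifying the set $J=\{i:v\in V_i\}$ and its size. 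I would organise this around the strata $W_S=V_S\cap(\sum_{S\prec T}V_T)^\perp$ indexed by the semilattice, and show that each $W_S$ is one of these joint eigenspaces.

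The key step is the claim that, for $S\ne U$, every $v\in W_S$ lies in $V_i$ exactly when $Q_i\preccurlyeq S$ and is orthogonal to $V_i$ otherwise. The first half is immediate, since $Q_i\preccurlyeq S$ forces $V_S\subseteq V_i$. For the second half I would use the orthogonal projections $P_S$ onto the subspaces $V_S$, namely the averaging operators over the parts of $S$: these commute, and $P_{Q_i}P_S=P_{Q_i\vee S}$ because $V_{Q_i}\cap V_S=V_{Q_i\vee S}$. If $Q_i\not\preccurlyeq S$ then $T:=Q_i\vee S$ satisfies $S\prec T$, so for $v\in W_S$ we get $P_{Q_i}v=P_{Q_i}P_Sv=P_Tv=0$, i.e.\ $v\perp V_i$. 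Thus $J=\{i:Q_i\preccurlyeq S\}$. Since the interval $[E,S]$ is a Boolean lattice of rank $\rho(S)$ whenever $S\ne U$, exactly $\rho(S)$ of the atoms $Q_0,\dots,Q_m$ lie below $S$, whence $|J|=\rho(S)$ and the eigenvalue on $W_S$ is $-(m+1)+\rho(S)q$. The partition $U$ is exceptional: all $m+1$ of the $Q_i$ lie below it, so $W_U=V_U$ is the one-dimensional space of constants, with eigenvalue $(m+1)(q-1)$.

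It then remains to compute the multiplicities $n_S=\dim W_S$ and to collect the strata of a common rank. From the orthogonal decomposition $V_S=\bigoplus_{S\preccurlyeq T}W_T$ and $\dim V_S=q^{m-\rho(S)}$ we obtain $q^{m-\rho(S)}=\sum_{S\preccurlyeq T}n_T$, and M\"obius inversion together with Theorem~\ref{th:Mob} gives $n_S=\sum_{S\preccurlyeq T}\mu(S,T)q^{m-\rho(T)}$. Writing $s=m-\rho(S)$ and using that $[S,U]$ is itself a diagonal semilattice of dimension $s$, this sum collapses to $n_S=q^{-1}(q-1)((q-1)^s-(-1)^s)$, exactly the displayed computation. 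Finally, for each $k\in\{0,\dots,m-1\}$ there are ${m+1\choose k}$ elements of rank $k$, each contributing the eigenvalue $-(m+1)+kq$ with the same multiplicity $n_S$ at $s=m-k$; summing these yields the stated multiplicity, and the single stratum $W_U$ supplies the remaining eigenvalue $(m+1)(q-1)$.

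The main obstacle I anticipate is not the bookkeeping but the structural input that $V=\bigoplus_S W_S$ is a genuine orthogonal decomposition, equivalently that the averaging projections $P_S$ form a commuting family with $P_SP_{S'}=P_{S\vee S'}$; this is precisely the assertion that the $Q_i$ constitute an orthogonal (Tjur) block structure, which I would import from the design-theoretic references rather than reprove. The one genuinely delicate point within the argument is the discrepancy $|J|=m+1\ne m=\rho(U)$ at the top of the semilattice: it is exactly why the principal family of eigenvalues runs only to $k=m-1$ and the eigenvalue $(m+1)(q-1)$ appears separately, mirroring the special treatment of the case $T=U$ in Theorem~\ref{th:Mob}.
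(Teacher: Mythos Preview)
Your proposal is correct and follows essentially the same route as the paper: decompose $V$ into the strata $W_S$, identify the eigenvalue on each stratum via the number of atoms $Q_i$ below $S$, and compute $\dim W_S$ by M\"obius inversion over the semilattice using Theorem~\ref{th:Mob}. If anything, you are slightly more explicit than the paper in justifying, via the averaging projections and $P_{Q_i}P_S=P_{Q_i\vee S}$, that vectors in $W_S$ are orthogonal to $V_i$ when $Q_i\not\preccurlyeq S$; the paper leaves this to the cited design-theoretic machinery.
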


\paragraph{Remark} The formula for $n_S$ differs only in a factor of $q$ from
the chromatic polynomial of the $(s+2)$-cycle graph (which can be obtained by
a similar M\"obius inversion over the poset of all subsets of the edge set
of the cycle graph). 

\paragraph{Remark} This formula agrees with the known formula for the cases
$m=2$ (Latin square graphs, with eigenvalues $3(q-1)$, $q-3$, $-3$ having
multiplicities $1$, $3(q-1)$, $(q-1)(q-2)$), and $q=2$ (the folded cube, where
the eigenvalues $-(m+1)+2k$ occur only if $m-k$ is odd, since
$(q-1)^{m-k}-(-1)^{m-k}=0$ when $q=2$ and $m-k$ is even).

\section{Clique number and chromatic number}
\label{s:chromatic}

Small diagonal graphs (of dimension~$2$ over groups of orders at most~$4$)
are somewhat exceptional. It is easy to see that
\begin{itemize}\itemsep0pt
\item $\Gamma_D(C_2,2)$ is the complete graph $K_4$;
\item $\Gamma_D(C_3,2)$ is the complete multipartite graph $K_{3,3,3}$ with
three parts of size~$3$;
\item $\Gamma_D(V_4,2)$ (where $V_4=C_2\times C_2$) is the complement of the
$4\times 4$ square lattice graph $L_2(4)$ (the line graph of the complete
bipartite graph $K_{4,4}$);
\item $\Gamma_D(C_4,2)$ is the complement of the \emph{Shrikhande graph},
the exceptional strongly regular graph having the same parameters as 
$L_2(4)$, discovered by Shrikhande~\cite{shrikhande}.
\end{itemize}
With these exceptions, the clique number of $\Gamma_D(G,m)$ is equal to the
order of~$G$, and the cliques of maximum size are precisely the parts of the 
partitions $Q_0,\ldots,Q_m$ (the minimal partitions in the diagonal
semilattice $\mathcal{D}(G,m)$). In fact, if $m>2$, then every clique of size
greater than $1$ is of this form; that is, every edge is contained in a unique
maximal clique.

An important problem which we have not been able to answer completely is
to find the chromatic number of $\Gamma_D(G,m)$. Recall that the
\emph{chromatic number} $\chi(\Gamma)$ of a graph~$\Gamma$ is the smallest
number of colours needed to colour the vertices of~$\Gamma$ so that adjacent
vertices have different colours.

We approach this problem using the
notion of graph homomorphism. Let $\Gamma$ and $\Delta$ be graphs. A map
$F:V(\Gamma)\to V(\Delta)$ is a \emph{homomorphism} if it maps edges to
edges. (Non-edges may map to non-edges, or to edges, or collapse to single
vertices.) We note two important properties of graph homomorphisms:
\begin{itemize}\itemsep0pt
\item A homomorphism $\Gamma\to K_r$ (where $K_r$ is the complete graph on
$r$ vertices) is a proper vertex-colouring of $\Gamma$ with $r$ colours
(since adjacent vertices of $\Gamma$ must map to distinct vertices of $K_r$).
\item If $F$ is a homomorphism from $\Gamma$ to $\Delta$, then 
$\chi(\Gamma)\leqslant\chi(\Delta)$. For, if we give to a vertex
$v\in V(\Gamma)$ the colour of $F(v)$ in the colouring of $\Delta$, we have a 
proper colouring of $\Gamma$.
\end{itemize}

\begin{prop}\label{p:hom}
For $m>2$, the map
\[(g_1,g_2,g_3,g_4,\ldots,g_m)\mapsto(g_1g_2^{-1}g_3,g_4,\ldots,g_m)\]
is a homomorphism from $\Gamma_D(G,m)$ to $\Gamma_D(G,m-2)$.
\end{prop}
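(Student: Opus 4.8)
The plan is to verify directly that $F$ carries each edge of $\Gamma_D(G,m)$ to an edge of $\Gamma_D(G,m-2)$, handling the edge types one at a time. Recall that two vertices of $\Gamma_D(G,m)$ are adjacent exactly when they lie in a common part of one of $Q_0,\ldots,Q_m$: an edge is of \emph{type}~$i$ (for $1\le i\le m$) when the two tuples differ in coordinate~$i$ alone, and of \emph{type}~$0$ when they are related by $h=(xg_1,\ldots,xg_m)$ for some $x\ne1$. Writing $F(g_1,\ldots,g_m)=(f_1,\ldots,f_{m-2})$ with $f_1=g_1g_2^{-1}g_3$ and $f_k=g_{k+2}$ for $2\le k\le m-2$, I would split into three groups of cases. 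Throughout, the one thing to keep verifying is that the two images stay distinct, since an edge may not collapse under a homomorphism.

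For an edge of type~$i$ with $4\le i\le m$ only $g_i$ changes, so $f_1$ is unaffected and the sole altered output is $f_{i-2}=g_i$; the images therefore differ in exactly one coordinate and give an edge of type~$i-2$ in the target. For an edge of type $i\in\{1,2,3\}$ the outputs agree in $f_2,\ldots,f_{m-2}$, and I would note that changing any single one of $g_1,g_2,g_3$ does change the product $g_1g_2^{-1}g_3$: this is immediate from the injectivity of translation and inversion in~$G$ (for example, $h_2\ne g_2$ gives $g_2^{-1}\ne h_2^{-1}$, hence $g_1g_2^{-1}g_3\ne g_1h_2^{-1}g_3$ after cancelling $g_1$ and~$g_3$). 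So these edges map to edges of type~$1$.

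The remaining case, an edge of type~$0$, is the crux, and is where the specific shape of $g_1g_2^{-1}g_3$ matters. With $h=(xg_1,\ldots,xg_m)$ I would compute
\[
f_1(h)=(xg_1)(xg_2)^{-1}(xg_3)=xg_1g_2^{-1}x^{-1}xg_3=x\,(g_1g_2^{-1}g_3)=x\,f_1(g),
\]
while $f_k(h)=xg_{k+2}=x\,f_k(g)$ for $k\ge2$ is automatic. Hence $F(h)=(xf_1(g),\ldots,xf_{m-2}(g))$, so $F(g)$ and $F(h)$ are related by left multiplication by the single element~$x$, placing them in a common part of $Q_0$ in the target graph; as $x\ne1$ they are distinct, and so they form an edge of type~$0$. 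I expect this computation --- the cancellation $x\cdot x^{-1}\cdot x=x$ that makes $g_1g_2^{-1}g_3$ transform covariantly under the diagonal left action --- to be the only real content of the argument; the rest is routine bookkeeping of coordinates.
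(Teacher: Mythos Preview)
Your proof is correct and follows essentially the same approach as the paper: a direct case split on the type of edge, with the key computation being the cancellation $(xg_1)(xg_2)^{-1}(xg_3)=x(g_1g_2^{-1}g_3)$ for type-$0$ edges. You are in fact slightly more careful than the paper in explicitly verifying that images remain distinct (so that edges do not collapse), but otherwise the arguments are the same.
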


\begin{proof}
This is routine checking. Take an edge in $\Gamma_D(G,m)$, joining 
$(g_1,\ldots,g_m)$ to $(h_1,\ldots,h_m)$. There are two
possibilities:
\begin{itemize}\itemsep0pt
\item There is some $i$ such that $g_i\ne h_i$ and $g_j=h_j$ for $j\ne i$.
If $i\leqslant 3$, 
then $g_1g_2^{-1}g_3\ne h_1h_2^{-1}h_3$ and $g_j=h_j$ for $j>3$; if $i>3$
then $g_1g_2^{-1}g_3=h_1h_2^{-1}h_3$ and $g_j=h_j$ for $j>3$ and $j\ne i$. In
either case, the images are adjacent in $\Gamma_D(G,m-2)$.
\item There is some $x\in G$ such that $h_i=xg_i$ for $1\leqslant i\leqslant m$.
Then we have
\[h_1h_2^{-1}h_3=xg_1g_2^{-1}x^{-1}xg_3=x(g_1g_2^{-1}g_3)\]
and $h_i=xg_i$ for $i>4$. \qed
\end{itemize}
\end{proof}

\begin{prop}\label{p:hp}
The chromatic number of $\Gamma_D(G,2)$ is at least $|G|$, with equality
if and only if either $G$ has odd order or the Sylow $2$-subgroups of $G$
are non-cyclic.
\end{prop}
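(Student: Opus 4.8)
The plan is to translate the statement into the language of Latin squares and complete mappings, and then to invoke the theorem of Hall and Paige. Since $\Gamma_D(G,2)$ is the Latin square graph of the Cayley table of~$G$, I identify its vertices with the cells of the $q\times q$ table (where $q=|G|$), cell $(r,c)$ carrying the symbol $rc$; the three minimal partitions $Q_0,Q_1,Q_2$ then become the rows, columns and symbol-classes. Each of these is a clique of size~$q$, so $\omega(\Gamma_D(G,2))\geqslant q$ and hence $\chi(\Gamma_D(G,2))\geqslant|G|$, which is the first assertion. For the equality I would first observe that a proper colouring using exactly $q$~colours is extremely rigid: in each row, which is a $q$-clique, all $q$~colours must occur, so each colour appears exactly once in every row, and likewise once in every column and once in every symbol-class. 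Thus each colour class is a \emph{transversal} of the table (a set of $q$~cells meeting every row, every column and every symbol exactly once), and a proper $q$-colouring is precisely a partition of the table into $q$~disjoint transversals; conversely any such partition is a proper $q$-colouring. So $\chi(\Gamma_D(G,2))=q$ if and only if the Cayley table decomposes into transversals.

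Next I would reduce this decomposition problem to the existence of a single transversal. A transversal of the Cayley table is exactly a \emph{complete mapping} of~$G$, that is, a bijection $\theta\colon G\to G$ for which $x\mapsto x\theta(x)$ is also a bijection, the transversal being $T=\{(x,\theta(x)):x\in G\}$. Given one such~$T$, I translate it in the row coordinate: for $g\in G$ put $gT=\{(gx,\theta(x)):x\in G\}$. Reindexing by $y=gx$ shows that $gT$ still meets every row and every column exactly once, and its symbols are $gx\,\theta(x)=g\,(x\theta(x))$, which range over all of~$G$ since $x\mapsto x\theta(x)$ is a bijection; hence each $gT$ is again a transversal. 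Moreover the sets $\{gT:g\in G\}$ partition the cells, because a cell $(y,c)$ lies in $gT$ precisely for the unique $g=y\,\theta^{-1}(c)^{-1}$. Therefore the Cayley table possesses a transversal if and only if it decomposes into transversals, and both are equivalent to $G$ admitting a complete mapping.

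Finally I would appeal to the Hall--Paige theorem: a finite group~$G$ has a complete mapping if and only if its Sylow $2$-subgroups are trivial or non-cyclic, which is exactly the stated condition (odd order, or even order with non-cyclic Sylow $2$-subgroups). The necessity of this condition is the elementary half of Hall and Paige's work, obtained by examining the product of all elements of~$G$ in the abelianisation; the sufficiency is the deep direction --- the former Hall--Paige conjecture --- whose proof relies on the classification of finite simple groups. This last implication is the step I would simply cite, and it is the main obstacle, in the sense that no elementary construction of the required complete mapping is available in general.
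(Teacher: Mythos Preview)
Your argument is correct and follows essentially the same route as the paper: both reduce the question to the existence of a complete mapping of~$G$ via the equivalence between proper $|G|$-colourings, decompositions into transversals, and complete mappings, and then invoke the Hall--Paige theorem (now a theorem by Wilcox, Evans and Bray). The only difference is that you spell out the equivalences explicitly (including the nice observation that a single transversal of a Cayley table yields a full decomposition by left translation), whereas the paper simply cites these equivalences from the literature.
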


\begin{proof}
A \emph{complete mapping} on a group $G$ is a bijection $\phi:G\to G$ such
that the map $\psi$ given by $\psi(g)=g\phi(g)$ for $g\in G$ is also a 
bijection. It is well-known (see~\cite{paige,bccsz}) that the following are
equivalent:
\begin{itemize}\itemsep0pt
\item $G$ has a complete mapping;
\item the Cayley table of $G$ has a transversal;
\item the Cayley table of $G$ has an orthogonal mate;
\item the Latin square graph of the Cayley table of $G$ has chromatic number
equal to $|G|$.
\end{itemize}
The \emph{Hall--Paige conjecture}~\cite{hp} asserts that a group $G$ has a
complete mapping if and only if either $G$ has odd order or the Sylow
$2$-subgroups of $G$ are not non-trivial cyclic groups. The conjecture was
proved in 2009 by Wilcox, Evans and Bray (see~\cite{wilcox,evans,bccsz}).
\qed
\end{proof}

From these two results, our main theorem follows:

\begin{theorem}
If $m$ is odd, or if $|G|$ is odd, or if the Sylow $2$-subgroups of $G$ are
non-cyclic, then $\chi(\Gamma_D(G,m)=m$. Otherwise,
$\chi(\Gamma_D(G,m)\leqslant\chi(\Gamma_D(G,2)$.
\end{theorem}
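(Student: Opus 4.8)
The plan is to combine a lower bound coming from the clique number with an upper bound obtained by iterating the homomorphism of Proposition~\ref{p:hom}. Recall that (outside the small exceptional cases of dimension~$2$) the clique number of $\Gamma_D(G,m)$ equals the order of~$G$, so $\chi(\Gamma_D(G,m))\geqslant|G|$; the task is therefore to exhibit a proper colouring with exactly $|G|$ colours in the three stated cases, and to reduce the remaining case to dimension~$2$. Note that the ``$m$'' on the right-hand side of the equality should read $|G|$, since the clique forces $\chi\geqslant|G|$.

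For the upper bound I would apply Proposition~\ref{p:hom} repeatedly. Each application sends $\Gamma_D(G,k)\to\Gamma_D(G,k-2)$ and is valid whenever $k>2$, so composing these maps lowers the dimension by~$2$ at each step. Starting from dimension~$m$ and continuing until the dimension is at most~$2$, the parity of~$m$ determines where the chain terminates: if $m$ is odd the last step goes from dimension~$3$ to dimension~$1$, landing at $\Gamma_D(G,1)$, while if $m$ is even the last step goes from dimension~$4$ to dimension~$2$, landing at $\Gamma_D(G,2)$ (the case $m=2$ being the degenerate tower of length zero). Since a homomorphism $\Gamma\to\Delta$ forces $\chi(\Gamma)\leqslant\chi(\Delta)$, I obtain $\chi(\Gamma_D(G,m))\leqslant\chi(\Gamma_D(G,1))$ when $m$ is odd and $\chi(\Gamma_D(G,m))\leqslant\chi(\Gamma_D(G,2))$ when $m$ is even.

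It then remains to evaluate the two base cases. When $m$ is odd, the target $\Gamma_D(G,1)$ is the complete graph on $|G|$ vertices, whose chromatic number is exactly $|G|$; together with the clique lower bound this gives $\chi(\Gamma_D(G,m))=|G|$. When $m$ is even I would invoke Proposition~\ref{p:hp}: if $|G|$ is odd or the Sylow $2$-subgroups of~$G$ are non-cyclic, then $\chi(\Gamma_D(G,2))=|G|$, so again $\chi(\Gamma_D(G,m))=|G|$ after using the lower bound. In the single remaining situation---$m$ even, $|G|$ even, and Sylow $2$-subgroups non-trivial cyclic---the homomorphism tower still yields the stated inequality $\chi(\Gamma_D(G,m))\leqslant\chi(\Gamma_D(G,2))$, with no claim of equality.

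The genuine obstacle lies not in assembling these pieces but in that final case: the exact value of $\chi(\Gamma_D(G,2))$ is then governed by the non-existence of a transversal (equivalently, a complete mapping) in the Cayley table of~$G$, so the chromatic number strictly exceeds $|G|$ and is not known in general—this is the source of the conjecture advertised in the abstract. I would also check the small dimension-$2$ exceptions separately, but these all either fall into the ``otherwise'' branch (as for $C_2$, whose Sylow $2$-subgroup is non-trivial cyclic) or already satisfy $\chi=|G|$ (as for $C_3$ and $V_4$), so they remain consistent with the statement.
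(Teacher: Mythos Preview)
Your proposal is correct and follows essentially the same route as the paper: iterate Proposition~\ref{p:hom} to reduce the dimension by two at each step, landing at $\Gamma_D(G,1)\cong K_{|G|}$ when $m$ is odd and at $\Gamma_D(G,2)$ when $m$ is even, then invoke Proposition~\ref{p:hp} for the even case; the clique of size $|G|$ supplies the matching lower bound. You also correctly spotted the typo in the statement (the ``$=m$'' should read ``$=|G|$'').
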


\begin{proof}
Assume that $m>2$.
If $m$ is odd, then Proposition~\ref{p:hom} shows that
$\chi(\Gamma_D(G,m))\leqslant\chi(\Gamma_D(G,1)$; but $\Gamma_D(G,1)$ is a
complete graph of order $|G|$. Also, $\Gamma_D(G,m)$ contains a clique of size
$|G|$; so its chromatic number is $|G|$.

If $m$ is even, the result follows from Propositions~\ref{p:hom} and
\ref{p:hp}. \qed
\end{proof}

We end this section with two remarks. First, the chromatic number of 
$\Gamma_D(C_2,m)$ (the folded cube) for $m$ even is known to be $4$ 
(see~\cite{payan}). Second, it is conjectured in~\cite{ghm} that, if $G$ has
non-trivial cyclic Sylow $2$-subgroups, then the chromatic
number of $\Gamma_D(G,2)$ (the Latin square graph of the Cayley table of $G$)
is $|G|+2$. If so, then this would be an upper bound for $\chi(\Gamma_D(G,m))$
for even $m\geqslant2$, and we further conjecture that this bound is attained
for all even~$m$.

\section{Other properties}

In the final section we give a few more properties of the diagonal graph.

\subsection{Basic properties}

We list here some properties established in~\cite{diag}.

\begin{prop}
Assume that $m\geqslant2$ and $|G|\geqslant2$, and let $\Gamma$ denote the
diagonal graph $\Gamma_D(G,m)$.
\begin{enumerate}\itemsep0pt
\item There are $|G|^m$ vertices, and the valency is $(m+1)(|G|-1)$.
\item Except for the case $m=|G|=2$, the clique number is $|G|$, and the
clique cover number is $|G|^{m-1}$.
\item $\Gamma_D(G_1,m_1)$ is isomorphic to $\Gamma_D(G_2, m_2)$ if and only if
$m_1=m_2$ and $G_1\cong G_2$.
\item The diameter of $\Gamma$ is $m+1-\lceil(m+1)/|G|\rceil$, which is at
most $m$, with equality if and only if $|G|>m+1$.
\end{enumerate}
\end{prop}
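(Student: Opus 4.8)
The plan is to take the four parts in turn, concentrating on the diameter formula~(d), which is the only part needing a real computation; parts~(a)--(c) reduce quickly to facts already recorded above or to the classification in Theorem~\ref{t:main}. For~(a), the vertex set is $G^m$ by definition. To count the valency, fix a vertex $g$: its neighbours are the vertices sharing a part with $g$ in one of $Q_0,\dots,Q_m$, and since every part of every $Q_i$ has size $q=|G|$, each $Q_i$ contributes $q-1$ of them. These $m+1$ neighbour sets meet only in $g$, because $Q_i\wedge Q_j=E$ for $i\ne j$ (distinct minimal partitions of a Cartesian lattice meet in the equality partition), so any vertex lying with $g$ in a common part of two of the $Q_i$ would lie with $g$ in a part of $E$, forcing equality. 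Summing gives valency $(m+1)(|G|-1)$. For~(b), the assertions about the clique number are precisely those established in Section~\ref{s:chromatic}, which I would quote; the clique cover number then follows from a two-sided bound valid whenever the clique number equals $|G|$ (that is, outside the case $m=|G|=2$): every clique has at most $|G|$ vertices, so a cover needs at least $|G|^m/|G|=|G|^{m-1}$ cliques, while the parts of a single partition $Q_i$ already cover $\Omega$ with exactly $q^{m-1}=|G|^{m-1}$ cliques.

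For~(c) the ``if'' direction is immediate. For the converse I would recover $m$, then $|G|$, then $G$ from the isomorphism type of the graph. When $m>2$, Section~\ref{s:chromatic} tells us that every edge lies in a unique maximal clique and that the maximal cliques are exactly the parts of $Q_0,\dots,Q_m$; hence $|G|$ is the common size of a maximal clique and $m+1$ is the number of maximal cliques through a vertex, so both parameters are isomorphism invariants, and the partitions $Q_0,\dots,Q_m$ are recovered up to relabelling as the parallel classes of maximal cliques. The $m\ge3$ part of Theorem~\ref{t:main} then recovers $G$ up to isomorphism from these partitions. The remaining small and $m=2$ cases I would dispatch as in~\cite{diag}, using the $m=2$ part of Theorem~\ref{t:main} and the correspondence between the Cayley table of $G$ and the group.

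Part~(d) is the substantive one, and I would first set up the Cayley description. The graph $\Gamma$ is the Cayley graph of $G^m$ whose connection set $S$ consists of all elements with exactly one non-identity coordinate (these give the parts of $Q_1,\dots,Q_m$ through the identity) together with all non-identity diagonal elements $(\xi,\dots,\xi)$ (the part of $Q_0$). Since $(ha)(ga)^{-1}=hg^{-1}$, right translation by $G^m$ preserves the relation $hg^{-1}\in S$, so $\Gamma$ is vertex-transitive and its diameter is $\max_w d(e,w)$, where $d(e,w)$ is the least length of a word in $S$ equal to $w$; here a letter either left-multiplies a single coordinate, or left-multiplies every coordinate by a common element. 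I claim that
\[d(e,w)=\min_{c\in G}\left([c\ne1]+\#\{i:w_i\ne c\}\right),\]
where $[c\ne1]$ is $1$ if $c\ne1$ and $0$ otherwise. The upper bound is constructive: for a given $c$, one diagonal move by $c$ (none if $c=1$) reaches $(c,\dots,c)$, after which a single move in each coordinate $i$ with $w_i\ne c$ reaches $w$. The lower bound is the key step, and the only point at which non-commutativity of $G$ must be handled: in any word for $w$, a coordinate that receives no single move ends at the ordered product of all the diagonal letters, the \emph{same} value $c$ for every such coordinate; hence at most $\#\{i:w_i=c\}$ coordinates are untouched, the word uses at least $\#\{i:w_i\ne c\}$ single moves, and it uses at least one diagonal move whenever $c\ne1$, giving length at least $[c\ne1]+\#\{i:w_i\ne c\}$.

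It remains to maximise $d(e,w)$ over $w$. Writing $n_c=\#\{i:w_i=c\}$, so that $\sum_{c}n_c=m$, I must maximise $\min_c\left([c\ne1]+m-n_c\right)$. The largest attainable value $t$ is the largest integer for which the caps $n_1\le m-t$ and $n_c\le m+1-t$ $(c\ne1)$ admit a distribution with $\sum_c n_c=m$; adding the caps gives $(m-t)+(|G|-1)(m+1-t)\ge m$, that is $t\le(m+1)-(m+1)/|G|$, and this bound is attained by a suitable integer distribution. Hence the diameter equals $(m+1)-\lceil(m+1)/|G|\rceil$. As $\lceil(m+1)/|G|\rceil\ge1$ this is at most $m$, with equality exactly when $\lceil(m+1)/|G|\rceil=1$, i.e.\ when $|G|\ge m+1$. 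I expect the main obstacle to be this lower bound in~(d): the temptation is to assume the generators commute, whereas the real content is the observation that the coordinates never altered by a single move are \emph{forced to agree}, each equalling the net diagonal element, which is exactly what pins the single-move count to $m-n_c$.
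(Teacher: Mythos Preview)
The paper does not prove this proposition at all: it simply states it with the remark ``We list here some properties established in~\cite{diag}.'' So there is no argument in the paper for you to be compared against. Your proposal therefore supplies what the paper omits, and it does so correctly. Parts~(a) and~(b) are handled exactly as one would expect; in~(c) you correctly reduce the $m\geqslant3$ case to Theorem~\ref{t:main} via the reconstruction of the minimal partitions as parallel classes of maximal cliques, and honestly defer the residual $m=2$ analysis to~\cite{diag}.

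Your treatment of~(d) is the real contribution, and it is sound. The Cayley connection set is identified correctly; the upper bound on $d(e,w)$ is the obvious construction; and the lower bound is the right idea, namely that in any word the coordinates never hit by a single-coordinate generator must all equal the ordered product~$c$ of the diagonal letters, so the number of single-coordinate moves is at least $\#\{i:w_i\ne c\}$, with at least one further diagonal move forced when $c\ne1$. (Your phrasing covers the edge case where every coordinate is touched: then the single-move count is already at least~$m$, which dominates the required bound for any~$c$.) The optimisation over distributions $(n_c)$ is also correct and yields $m+1-\lceil(m+1)/|G|\rceil$.

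One point worth flagging: your equality condition is $|G|\geqslant m+1$, whereas the stated proposition says $|G|>m+1$. Your version is the one consistent with the formula, since $\lceil(m+1)/|G|\rceil=1$ precisely when $|G|\geqslant m+1$; for instance $\Gamma_D(C_3,2)\cong K_{3,3,3}$ has diameter $2=m$ with $|G|=m+1=3$. So the strict inequality in the statement appears to be a slip, and your conclusion is the correct one.
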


\subsection{Symmetry}

\begin{prop}
Assume that $m\geqslant2$ and $|G|\geqslant2$, and let $\Gamma$ denote the
diagonal graph $\Gamma_D(G,m)$.
\begin{enumerate}\itemsep0pt
\item $\Gamma$ is a Cayley graph for the group $G^m$, and hence is
vertex-transitive.
\item $\Gamma$ is edge-transitive if and only if $G$ is elementary abelian.
\item $\Gamma$ is vertex-primitive if and only if
  \begin{itemize}
  \item $G$ is characteristically simple (that is, a direct product
of simple groups), and
  \item if $G$ is an elementary abelian $p$-group then $p\nmid m+1$.
  \end{itemize}
\item If $m>2$ or $|G|>4$, then $\Aut(\Gamma)$ is transitive on the set of
cliques of maximal size in $\Gamma$.
\end{enumerate}
\end{prop}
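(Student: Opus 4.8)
The plan is to handle the four statements separately, in each case reducing to the concrete structure of the diagonal group $D(G,m)$. Since $\Aut(\Gamma)=D(G,m)$ whenever $m>2$ or $|G|>4$ (by the theorem stated near the end of Section~\ref{s:diagonal}), I would argue the generic case with $D(G,m)$ and dispose of the finitely many exceptions ($m=2$, $|G|\le4$) by direct inspection of the explicit graphs $K_4$, $K_{3,3,3}$, $\overline{L_2(4)}$ and the complement of the Shrikhande graph listed in Section~\ref{s:chromatic}. For (a), I would observe that $G^m$ acting by right multiplication is regular on $\Omega=G^m$ and preserves each $Q_i$ (it permutes the parts of $Q_i$, and it preserves $Q_0$ because $h_i=xg_i$ gives $h_ia_i=x(g_ia_i)$); hence $G^m$ is a regular subgroup of $\Aut(\Gamma)$ and $\Gamma$ is the Cayley graph on $G^m$ with connection set $S=B_0\cup B_1\cup\cdots\cup B_m$, where $B_i=\{(1,\dots,g,\dots,1):g\ne1\}$ (entry $g$ in place $i$) for $i\ge1$ and $B_0=\{(x,\dots,x):x\ne1\}$. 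Vertex-transitivity is then immediate.

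For (b) I would compute the orbits of the stabiliser $\Aut(\Gamma)_{\mathbf 1}$ on $S$. This stabiliser is generated by $\Aut(G)$ (coordinatewise), $\Sym(m)$, and the map $\tau\colon(g_1,\dots,g_m)\mapsto(g_1^{-1},g_1^{-1}g_2,\dots,g_1^{-1}g_m)$. A short calculation shows that $\tau$ sends the element of $B_0$ with entry $x$ to the element of $B_1$ with entry $x^{-1}$ (and conversely), while fixing $B_2,\dots,B_m$ pointwise; together with $\Sym(m)$ this yields a copy of $\Sym(m+1)$ permuting the $m+1$ blocks transitively, whereas $\Aut(G)$ acts on the entry inside each block. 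I would then note that two edges at $\mathbf 1$ lie in a common $\Aut(\Gamma)$-orbit exactly when the corresponding elements of $S$ are related by the stabiliser or by inversion (flipping an edge and translating back produces a stabiliser element carrying $s$ to $(s')^{-1}$), and that inversion is already present through $\tau$. Hence edge-transitivity is equivalent to transitivity of $\Aut(G)$ on $G\setminus\{1\}$, which holds iff $G$ is elementary abelian: the centre is a characteristic obstruction in the nonabelian exponent-$p$ case, and exponent $2$ forces abelianness.

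For (c) I would pass to the symmetric model $\Omega\cong G^{m+1}/\delta(G)$ with $\delta(G)=\{(x,\dots,x)\}$, on which $\Sym(m+1)$ permutes the $m+1$ coordinates, $\Aut(G)$ acts coordinatewise, and $G^{m+1}$ acts by right multiplication modulo $\delta(G)$. Imprimitivity means a proper nontrivial $D(G,m)$-invariant partition, and I would exhibit the only two sources. First, a proper nontrivial characteristic subgroup $N\le G$ yields the equivariant map $G^{m+1}/\delta(G)\to(G/N)^{m+1}/\delta(G/N)$, whose fibres are nontrivial blocks; so $G$ must be characteristically simple. Second, if $G$ is elementary abelian of characteristic $p$ with $p\mid m+1$, then $\delta(G)$ lies in the sum-zero subspace $W=\{(v_0,\dots,v_m):\sum_i v_i=0\}$, and the cosets of $W/\delta(G)$ form nontrivial blocks. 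The converse is the crux: assuming $G$ characteristically simple, and $p\nmid m+1$ in the elementary abelian case, I must prove primitivity by showing these are the only invariant partitions. In the elementary abelian case $G^m$ is normal in $D(G,m)$, so blocks correspond to stabiliser-invariant subspaces, and the claim reduces to irreducibility of the standard $\Sym(m+1)$-module over $\mathbb F_p$, which is irreducible precisely when $p\nmid m+1$; the nonabelian characteristically simple case instead requires analysing the minimal normal subgroups of $G^{m+1}$ under the $\Sym(m+1)$-action. I expect this sufficiency direction — ruling out all invariant partitions rather than constructing one — to be the main obstacle.

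Finally, for (d), when $m>2$ or $|G|>4$ the maximal cliques are exactly the parts of $Q_0,\dots,Q_m$ (Section~\ref{s:chromatic}). Transitivity then follows in two steps: the subgroup $\Sym(m+1)\le\Aut(\Gamma)$ from part (b) permutes the $m+1$ partitions transitively, and inside any one partition $G^m$ (right multiplication) permutes its parts transitively, since these are the cosets of a coordinate subgroup, or of $\delta(G)$ for $Q_0$. Composing a coordinate permutation with a translation carries any maximal clique to any other.
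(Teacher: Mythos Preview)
Your proof is correct and, for parts (a), (b) and (d), follows essentially the same route as the paper: the regular action of $G^m$ for (a); the identification of the vertex stabiliser as $\Aut(G)\times\Sym(m+1)$ and the reduction of edge-transitivity to transitivity of $\Aut(G)$ on $G\setminus\{1\}$ for (b); and the two-step argument ($\Sym(m+1)$ on the partitions, $G^m$ on the parts) for (d). Your explicit treatment of the four exceptional graphs with $m=2$ and $|G|\le4$ is in fact more careful than the paper, which tacitly assumes $\Aut(\Gamma)=D(G,m)$ throughout.

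The one genuine divergence is part (c). The paper does not prove this at all: it simply cites \cite[Theorem~1.6]{diag}. Your sketch correctly isolates the two obstructions (a proper characteristic subgroup, and the sum-zero hyperplane when $p\mid m+1$) and correctly flags the converse as the hard step. Your reduction in the elementary abelian case to irreducibility of the deleted permutation module is sound, though note that for $|G|=p^k$ with $k>1$ you need the joint $\mathrm{GL}_k(\mathbb F_p)\times\Sym(m+1)$ action on $\mathbb F_p^k\otimes(\mathbb F_p^{m+1}/\langle\mathbf 1\rangle)$, not just the $\Sym(m+1)$ action; and in the nonabelian characteristically simple case your plan is still only a pointer. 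None of this is a defect relative to the paper, which outsources the whole argument, but be aware that a self-contained proof of (c) is substantially longer than what you have written.
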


\begin{proof}
(a) Recall that $\Aut(\Gamma)$ is the diagonal group $D(G,m)$, and also the
list of generators of the diagonal group given in the first section. The
generators of the first type in the list form a group isomorphic to $G^m$,
acting regularly on the vertex set of $\Gamma$; so $\Gamma$ is a Cayley graph.

We remark here that the generators of the fourth and fifth type generate a
group isomorphic to $\Sym(m+1)$ normalising the group generated by those of
the first two types, and acting as the symmetric group on the $m+1$ partitions
$Q_0,\ldots,Q_m$. (The fourth type induce $\Sym(m)$ on $Q_1,\ldots,Q_m$, while
the fifth type interchanges $Q_0$ and $Q_1$ and fixes the
others.) If $G$ is non-abelian, then the group $G^m$ generated by elements
of the first type has $m+1$ images under $\Sym(m+1)$; so there are $m+1$
subgroups of $\Aut(\Gamma)$ acting regularly, and so $\Gamma$ is a Cayley
graph for $m+1$ distinct, though isomorphic, groups.
On the other hand, if $G$ is abelian then elements of the second type are
already contained in the group $G^m$ generated by elements of the first type,
and so the images of this subgroup under $\Sym(m+1)$ are all equal.

\medskip

(b) The stabiliser of a vertex has structure $\Aut(G)\times\Sym(m+1)$. The
symmetric group permutes the $m+1$ cliques of size $|G|$ containing the 
fixed vertex; each clique is bijective with $G$, the fixed vertex corresponding
to the identity, and $\Aut(G)$ acts on it in the natural way. Thus $\Gamma$
is edge-transitive if and only if $\Aut(G)$ is transitive on the non-identity
elements of $G$, which occurs if and only if $G$ is elementary abelian.

\medskip

(c) This is proved in \cite[Theorem 1.6]{diag}.

\medskip

(d) This follows from the above remarks: each such clique is a part of a 
partition $Q_i$; now $\Sym(m+1)$ permutes the partitions transitively, and
$G^m$ fixes each partition and permutes transitively the parts of each. \qed
\end{proof}

Recall the definition of distance-regularity from \cite{bcn}.

\begin{prop}
The graph $\Gamma_D(G,m)$ is distance-regular if and only if either $m=2$
or $|G|=2$.
\end{prop}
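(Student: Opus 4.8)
The plan is to prove the ``if'' direction by identifying the two special families, and the ``only if'' direction by showing that when $m\geqslant3$ and $|G|\geqslant3$ the graph fails a necessary condition for distance-regularity; within the latter I would split into two ranges of $|G|$, using a spectral obstruction for small groups and a direct intersection-number computation for large ones.

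For the ``if'' direction: when $m=2$ the graph is the Latin square graph of the Cayley table of $G$, which is strongly regular and hence distance-regular (of diameter~$2$); when $|G|=2$ the graph is the folded cube, a classical distance-regular graph (see~\cite{bcn}). The case $m=|G|=2$ (namely $K_4$) falls under either branch. So it remains to show that $\Gamma_D(G,m)$ is \emph{not} distance-regular once $m\geqslant3$ and $q:=|G|\geqslant3$. First suppose $3\leqslant q\leqslant m$. By the spectrum computation above, the eigenvalue $-(m+1)+kq$ has a multiplicity that is a positive multiple of $(q-1)^{m-k}-(-1)^{m-k}$, and for $q\geqslant3$ this is strictly positive for every $k\in\{0,\dots,m-1\}$ (since $(q-1)^{m-k}\geqslant2>1$ when $m-k\geqslant1$); together with the valency $(m+1)(q-1)$ this yields exactly $m+1$ distinct eigenvalues. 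But the diameter is $d=m+1-\lceil(m+1)/q\rceil$, and $q\leqslant m$ forces $\lceil(m+1)/q\rceil\geqslant2$, so $d\leqslant m-1$. A connected distance-regular graph of diameter $d$ has exactly $d+1$ distinct eigenvalues, whereas here we have $m+1>d+1$ of them; hence the graph is not distance-regular.

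For the remaining range $q\geqslant m+1$ (where the diameter is exactly $m$ and the eigenvalue count gives no contradiction), I would work in the homogeneous model in which a vertex is an $(m+1)$-tuple $(h_0,\dots,h_m)\in G^{m+1}$ taken modulo the diagonal left-translation action of~$G$, so that the $m+1$ minimal partitions $Q_0,\dots,Q_m$ become ``agree in all coordinates but one''. In this model the graph distance from the base vertex $[\mathbf{1}]$ to a class $[h]$ equals $(m+1)-\varphi(h)$, where $\varphi(h)$ is the largest number of coordinates of~$h$ carrying a single common value; this reproduces the diameter formula and is essentially the distance computation of~\cite{diag}. The key fact is that a vertex at distance~$k$ has exactly $t\cdot k$ neighbours at distance $k-1$, where $t$ is the number of distinct values attaining the maximal frequency $f=m+1-k$: to move closer one must raise some maximal-frequency value from $f$ to $f+1$ by re-labelling one of the $k=(m+1)-f$ coordinates not already carrying it, and distinct re-labellings give distinct classes because, with $m+1\geqslant3$ coordinates, two single-coordinate alterations of $h$ that are translates of one another agree at some untouched coordinate and so must coincide. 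Taking $k=m-1$ (so $f=2$), a tuple with one repeated value together with $m-1$ further \emph{distinct} values has $t=1$, while a tuple with two values each used twice together with $m-3$ further distinct values has $t=2$; both configurations exist because $q\geqslant m+1$ supplies enough labels. Thus two vertices at distance $m-1$ have $m-1$ and $2(m-1)$ neighbours at distance $m-2$ respectively, and since $m\geqslant3$ these numbers differ, contradicting distance-regularity.

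The spectral half is immediate from results already established, so the main obstacle is the combinatorial half. Concretely, the hard part will be justifying the tuple model and the distance formula $(m+1)-\varphi(h)$ with full rigour (the inequality ``distance $\geqslant(m+1)-\varphi$'' from the fact that one edge changes $\varphi$ by at most one, and the reverse by exhibiting a shortening edge), since everything else---the count $t\cdot k$ and the explicit choice of a $t=1$ and a $t=2$ vertex at distance $m-1$---is then routine. I would also verify that the two ranges $3\leqslant q\leqslant m$ and $q\geqslant m+1$ genuinely exhaust all cases (they meet at $q=m$, which the spectral argument already covers), and note as a consistency check that for $|G|=2$ or $m=2$ the frequency profile at each distance is forced, so $t$ is constant and the combinatorial obstruction correctly does \emph{not} fire.
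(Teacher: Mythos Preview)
Your proof is correct but follows a genuinely different route from the paper's. The paper does not split on the size of $|G|$ relative to~$m$; it splits on the \emph{parity} of~$m$, and it works entirely in the $G^m$ model without invoking either the spectrum or the homogeneous $(m{+}1)$-tuple description. For $m=2k+1$ it compares the two vertices $a=(g,\ldots,g,1,\ldots,1)$ and $b=(h,g,\ldots,g,1,\ldots,1)$ (each with $k+1$ non-identity entries) at distance $k+1$ from the identity and counts their neighbours at distance~$k$; for $m=2k$ it takes analogous vertices with $k$ non-identity entries and instead counts neighbours at the \emph{same} distance~$k$. In each case the vertex~$a$ admits exactly one extra move, coming from left multiplication by~$g^{-1}$, and this single surplus neighbour breaks distance-regularity. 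Your argument, by contrast, disposes of the range $3\leqslant q\leqslant m$ essentially for free from the spectrum theorem already proved (the $m+1$ distinct eigenvalues exceed $d+1$), and for $q\geqslant m+1$ works at distance $m-1$ in the homogeneous model, distinguishing vertices by the number~$t$ of values attaining the maximal frequency and using the clean formula $c_k=t\cdot k$. The paper's approach is more elementary and self-contained; yours recycles the spectral computation efficiently and makes the failure of the intersection number $c_{m-1}$ very transparent, at the cost of setting up and justifying the homogeneous model and its distance formula.
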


\begin{proof}
If $m=2$, then $\Gamma_D(G,m)$ is a Latin square graph, which is strongly
regular; if $G=C_2$, then $\Gamma_D(G,m)$ is a folded cube, which is
distance-transitive. In either case the graph is distance-regular.

So suppose that $m>2$ and $|G|>2$.

First consider the case where $m$ is odd, say $m=2k+1$. We argue as in
Example 2 given earlier. Take two non-identity
elements $g,h\in G$. Let $a$ and $b$ be the elements of $G^m$ given by
$a=(g,\ldots,g,1,\ldots,1)$ and $b=(h,g,\ldots,g,1,\ldots,1)$, where in each
case the number of non-identity elements is $k+1$. Each of these elements has
distance $k+1$ from the identity element of $G^m$. In the case of $b$, there
are $k+1$ neighbours at distance $k$ from the identity, obtained by changing
one of the non-identity entries in $h$ to the identity. In the case of $a$,
there are $k+2$ such elements, since additionally we may multiply on the
left by $g^{-1}$ to get $(1,\ldots,1,g^{-1},\ldots,g^{-1})$ with $k$
non-identity elements.

Now suppose that $m$ is even, say $m=2k$ with $k>1$. Again take $g,h$ to be
non-identity elements in $G$, and let
$a=(g,\ldots,g,1,\ldots,1)$ and $b=(h,g,\ldots,g,1,\ldots,1)$, where in each
case there are $k$ non-identity elements. These vertices are at distance
$k$ from the identity; we count neighbours which are also at distance $k$
from the identity. In the case of $b$, there are $k(|G|-2)$ of these, since
we may change any of the first $k$ elements to a different non-identity
element of $G$. In the case of $a$, there is one extra such vertex, namely
$(1,\ldots,1,g^{-1},\ldots,g^{-1})$. 

In both cases the graph fails to be distance-regular.
\qed
\end{proof}

\end{document}